\numberwithin{equation}{section}
\numberwithin{equation}{section}
\newcommand{\calD}{\mathcal{D}}
\newcommand{\calS}{\mathcal{S}}
\newcommand{\mC}{\mathbb{C}}
\newcommand{\mN}{\mathbb{N}}
\newcommand{\mR}{\mathbb{R}}
\newcommand{\mZ}{\mathbb{Z}}
\newcommand{\bbm}{\mathbf{m}}
\newcommand{\bbu}{\mathbf{u}}
\newcommand{\bbv}{\mathbf{v}}
\newcommand{\bbx}{\mathbf{x}}
\newcommand{\bbS}{\mathbf{S}}
\newcommand{\nm}{\,\rule[-.6ex]{.13em}{2.3ex}\,}
\newtheorem{theorem}{Theorem}[section]
\theoremstyle{definition}
\theoremstyle{definition}
\newtheorem{definition}[theorem]{Definition}
\theoremstyle{definition}
\newtheorem{notation}[theorem]{Notation}
\begin{document}

\title[Spatially invariant behaviours]
      {Algebraic characterization of approximate controllability  
       of behaviours of spatially invariant systems}
       
\author{Amol Sasane}
\address{Department of Mathematics, Faculty of Science, 
    Lund University, Sweden.}
\email{amol.sasane@math.lu.se}

\subjclass{Primary: 35A24; Secondary: 93B05, 93C20, 35E20.}

\keywords{systems of linear partial differential equations with
  constant coefficients, approximate controllability, controllability,
  Fourier transformation, behaviours, distributions that are periodic
  in the spatial directions}

\begin{abstract}
An algebraic characterization of the property of approximate
controllability is given, for behaviours of spatially invariant dynamical
systems, consisting of distributional solutions $w$, that are periodic
in the spatial variables, to a system of partial differential
equations
$$
M\left(
\frac{\partial}{\partial x_1}, \cdots, \frac{\partial}{\partial x_d}, 
\frac{\partial}{\partial t}
\right) w=0,
$$
corresponding to a polynomial matrix $M\in ({\mathbb{C}}
[\xi_1,\dots, \xi_d, \tau])^{m\times n}$. This settles an issue 
left open in \cite{Sas}.
\end{abstract}

\maketitle

\section{Introduction}

Consider a homogeneous, linear, constant coefficient partial 
differential equation, in ${\mathbb{R}}^{d+1}$ described by a
polynomial  $p\in {\mathbb{C}}[\xi_1,\dots, \xi_d, \tau]$:
\begin{equation}
\label{eq_ker_rep_p}
p\left(
\frac{\partial}{\partial x_1}, \cdots, \frac{\partial}{\partial x_d}, 
\frac{\partial}{\partial t}
\right) w=0.
\end{equation}
That is, the differential operator 
$$
p\left( \displaystyle 
\frac{\partial}{\partial x_1}, \cdots, \frac{\partial}{\partial x_d}, 
\frac{\partial}{\partial t} \right)
$$ 
is obtained from the polynomial $p\in {\mathbb{C}}[\xi_1,\dots,\xi_d, \tau]$ 
by making the replacements
$$
\xi_k\rightsquigarrow
\frac{\partial}{\partial x_k} \;\textrm{ for } k=1, \dots, d, 
\;\textrm{ and }\;\;
\tau\rightsquigarrow \frac{\partial}{\partial t}.
$$ 
More generally, given a polynomial {\em matrix} $M \in
({\mathbb{C}}[\xi_1,\dots, \xi_d, \tau])^{m\times n}$, consider the
corresponding {\em system} of partial differential equations
\begin{equation}
\label{eqn_PDE_system}
M\left( \displaystyle 
\frac{\partial}{\partial x_1}, \cdots, \frac{\partial}{\partial x_d}, 
\frac{\partial}{\partial t}\right) w:=
\left[\begin{array}{ccc} 
\displaystyle \sum_{j=1}^n p_{1j} \left( \displaystyle 
\frac{\partial}{\partial x_1}, \cdots, \frac{\partial}{\partial x_d}, 
\frac{\partial}{\partial t}\right) w_{j} 
\\ \vdots \\ 
\displaystyle 
\sum_{j=1}^n p_{mj} \left( \displaystyle 
\frac{\partial}{\partial x_1}, \cdots, \frac{\partial}{\partial x_d}, 
\frac{\partial}{\partial t}\right) w_{j}
\end{array} \right] = 0,
\end{equation}
where solutions $w$ now have the $n$ components $w_1, \dots, w_n$, and
$M=[p_{ij}]$ with $p_{ij}$ denoting the polynomial entries of $M$ for
$1\leq i\leq m$ and $1\leq j \leq n$.

In the behavioural approach to control theory pioneered by Willems
\cite{PolWil}, the ``behaviour'' ${\mathfrak{B}}_{{\mathcal{W}}}(M)$
associated with $M$ in ${\mathcal{W}}^n$ (where ${\mathcal{W}}$ is an
appropriate solution space, for example smooth functions
$C^\infty({\mathbb{R}}^{d+1})$ or distribution spaces like
${\mathcal{D}}'({\mathbb{R}}^{d+1})$ or ${\mathcal{S}}'({\mathbb{R}}^{d+1})$ 
and so on), is defined to be the set of all solutions 
$w\in {\mathcal{W}}^n$ that satisfy the above partial differential equation 
system \eqref{eqn_PDE_system}. Let us recall the notion of a behaviour 
associated with a system of partial differential equations associated 
with a polynomial matrix $M$. 

\begin{definition}[Solution space invariant under differentiation; Behaviour]
Let ${\mathcal{W}}$ be a subspace of $({\mathcal{D}}'({\mathbb{R}}^{d+1}))^n$ 
which is {\em invariant under differentiation}, that is, for all $w\in \mathcal{W}$, 
\begin{eqnarray*}
&& \frac{\partial}{\partial x_k} w\in \mathcal{W}, \textrm{ for all } k=1,\cdots, d, \textrm{ and}\\
&& \frac{\partial}{\partial t} w\in \mathcal{W}.
\end{eqnarray*}
The {\em behaviour ${\mathfrak{B}}_{{\mathcal{W}}}(M)$
associated with $M\in
({\mathbb{C}}[\xi_1,\dots, \xi_d, \tau])^{m\times n}$ in ${\mathcal{W}}^n$} is 
$$
{\mathfrak{B}}_{{\mathcal{W}}}(M):=
\left\{ w\in {\mathcal{W}}^n: M\left( \displaystyle 
\frac{\partial}{\partial x_1}, \cdots, \frac{\partial}{\partial x_d}, 
\frac{\partial}{\partial t}\right) w=0\right\}.
$$
\end{definition}

The aim in the behavioural approach to control theory is then to obtain 
algebraic characterizations (in terms of algebraic properties 
of the polynomial matrix $M$) of certain analytical properties of 
${\mathfrak{B}}_{{\mathcal{W}}}(M)$ (for example, the control theoretic 
properties of autonomy, controllability, stability, and so on). 
We refer the reader to \cite{PolWil} for background on the 
behavioural approach in the case of systems of ordinary differential
equations, and to \cite{BalSta}, \cite{PilSha}, \cite{SasThoWil} for
distinct takes on this in the context of systems described by partial
differential equations.

The goal of this article is to give algebraic characterizations of the
properties of approximate controllability of behaviours of spatially
invariant dynamical systems, consisting of distributional solutions
$w$, that are periodic in the spatial variables, to a system of
partial differential equations
$$
M\left(
\frac{\partial}{\partial x_1}, \cdots, \frac{\partial}{\partial x_d}, 
\frac{\partial}{\partial t}
\right) w=0,
$$ 
corresponding to a polynomial matrix 
$M\in ({\mathbb{C}}[\xi_1,\dots, \xi_d, \tau])^{m\times n}$. 
This settles a question left open in \cite{Sas}. 

We remark that there has been recent
interest in ``spatially invariant systems'', see for example
\cite{Cur},  \cite{CurSas}, where one considers solutions to
partial differential equations that are periodic along the spatial
direction.

We give the relevant definitions below, and also state our main
results in Theorem~\ref{main_theorem} (characterizing approximate
controllability).

\subsection{Controllability and approximate controllability} 

Let us first recall the property of ``controllability'', which means
the following.

\begin{definition}[Controllability; Approximate controllability]
Let ${\mathcal{W}}$ be a subspace of $({\mathcal{D}}'({\mathbb{R}}^{d+1}))^n$ 
which is invariant under differentiation, and suppose that $M\in ({\mathbb{C}}[\xi_1, \dots,
  \xi_d,\tau])^{m\times n}$. 
\begin{enumerate}
 \item 
The  behaviour
${\mathfrak{B}}_{{\mathcal{W}}}(M)$ in ${\mathcal{W}}$ is called {\em
  controllable in time} $\textrm{T}> 0$ if for every $w_1, w_2 \in
{\mathfrak{B}}_{{\mathcal{W}}}(M)$, there is a $w\in
{\mathfrak{B}}_{{\mathcal{W}}}(M)$ such that 
\begin{eqnarray*}
 w|_{(-\infty, 0)}&=&w_1|_{(-\infty,0)}\textrm{  and}\\
 w|_{(\textrm{T} , +\infty)}&=&w_2|_{(\textrm{T} , +\infty)}
\end{eqnarray*}

\item  
The behaviour ${\mathfrak{B}}_{{\mathcal{W}}}(M)$
in ${\mathcal{W}}$ is called {\em approximately controllable in time}
$\textrm{T}> 0$ if for every $\epsilon>0$ and for all $w_1, w_2 \in
{\mathfrak{B}}_{{\mathcal{W}}}(M)$, there is a $w\in
{\mathfrak{B}}_{{\mathcal{W}}}(M)$ such that 
\begin{eqnarray*}
 w|_{(-\infty, 0)}&=&w_1|_{(-\infty,0)},
\end{eqnarray*}
and  $(w-w_2)|_{(\textrm{T} , +\infty)}$ is a
regular distribution on $(\textrm{T} , +\infty)\times \mR^d$ with $$
\sup_{(t,\bbx)\in (\textrm{T} , +\infty)\times \mR^d} \nm
(w-w_2)|_{(\textrm{T} , +\infty)} ( t,\bbx)\nm_{2} <\epsilon.
 $$
 \end{enumerate}
 See Figure~\ref{c_versus_ac}.
 \begin{figure}[h]
    \center
    \psfrag{p}[c][c]{$w_1$}
    \psfrag{f}[c][c]{$w_2$}
    \psfrag{e}[c][c]{$\epsilon$}
    \psfrag{T}[c][c]{$\mathrm{T}$}
    \psfrag{w}[c][c]{$w$}
    \psfrag{0}[c][c]{$0$}
    \includegraphics[width=12.9 cm]{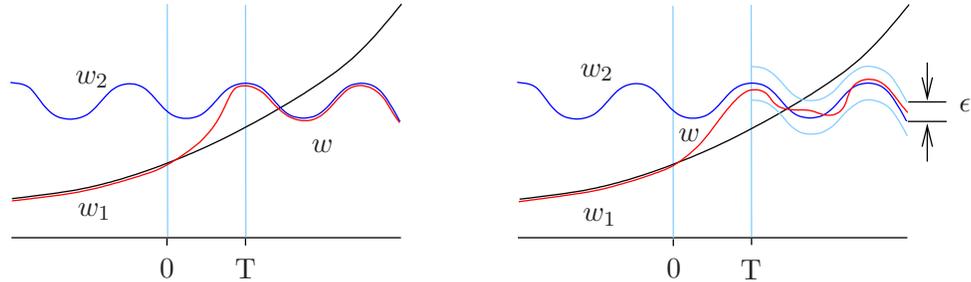}
    \caption{Controllability versus approximate controllability.}
    \label{c_versus_ac}
 \end{figure}
\end{definition}

Our main result is the following. 

\begin{theorem}
\label{main_theorem}
Suppose that ${\mathbb{A}}:=\{{\mathbf{a_1}}, \dots, {\mathbf{a_d}}\}$
is a linearly independent set of vectors in ${\mathbb{R}}^d$. Let $M \in
({\mathbb{C}}[\xi_1,\dots, \xi_d, \tau])^{m\times n}$ and let
$$
{\mathfrak{B}}_{{\mathcal{D}}'_{{\mathbb{A}}}({\mathbb{R}}^{d+1})}(M):=
\left\{w\in ({\mathcal{D}}'_{{\mathbb{A}}}({\mathbb{R}}^{d+1}))^{n}: 
M\left( \frac{\partial}{\partial x_1},\cdots, \frac{\partial}{\partial x_d}, 
\frac{\partial}{\partial t}\right) w=0\right\}.
$$ 
Then the following statements are equivalent:
\begin{enumerate}
 \item
   ${\mathfrak{B}}_{{\mathcal{D}}'_{{\mathbb{A}}}({\mathbb{R}}^{d+1})}(M)$
   is approximately controllable in time $\textrm{\em T}>0$.
 \item
   ${\mathfrak{B}}_{{\mathcal{D}}'_{{\mathbb{A}}}({\mathbb{R}}^{d+1})}(M)$
   is controllable in time $\textrm{\em T}>0$.
 \item For each $\mathbf{v}\in A^{-1}{\mathbb{Z}}^d$, there exists an
   $r_{\mathbf{v}}\in \{0,1,2,3,\cdots\}$ satisfying $ r_{\mathbf{v}}
   \leq \min\{n,m\}$ and such that for all $t\in {\mathbb{C}}$, $
   \textrm{\em rank}\left( M( 2\pi i \mathbf{v}, t) \right) =
   r_{\mathbf{v}}. $
\item For each $\mathbf{v}\in A^{-1}{\mathbb{Z}}^d$, the
  $\mC[\tau]$-module $\mC[\tau]^{1\times n}/\langle M(2\pi i \bbv,
  \tau) \rangle$ is torsion free.
\end{enumerate}
\end{theorem}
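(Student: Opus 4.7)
The plan is to use spatial Fourier series to reduce the PDE problem to a countable family of ODE problems indexed by $\bbv\in A^{-1}\mathbb{Z}^d$. Every $w\in \mathcal{D}'_{\mathbb{A}}(\mathbb{R}^{d+1})$ admits an expansion
$$
w(t,\bbx) \;=\; \sum_{\bbv\in A^{-1}\mathbb{Z}^d} w_{\bbv}(t)\,e^{2\pi i\,\bbv\cdot\bbx}, \qquad w_{\bbv}\in \mathcal{D}'(\mathbb{R})^n,
$$
and the PDE $M(\partial)w=0$ decouples mode by mode into the ODE systems $M(2\pi i\bbv,d/dt)w_{\bbv}=0$, whose behaviour I denote $\mathfrak{B}_{\bbv}$. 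The implication (2)$\Rightarrow$(1) is immediate, and the chain (2)$\Leftrightarrow$(3)$\Leftrightarrow$(4) was established in \cite{Sas}; the new content is therefore (1)$\Rightarrow$(3), which I prove by contraposition.

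Assume (3) fails, so for some $\bbv_0\in A^{-1}\mathbb{Z}^d$ the rank of $M(2\pi i\bbv_0,\tau)$ is not constant in $\tau$. By (4) the $\mathbb{C}[\tau]$-module $\mathbb{C}[\tau]^{1\times n}/\langle M(2\pi i\bbv_0,\tau)\rangle$ carries a non-zero torsion class $[\bba(\tau)]$: there exist row polynomials $\bba(\tau)$, $\bbb(\tau)$ and a non-constant $p(\tau)\in\mathbb{C}[\tau]$ with $p(\tau)\bba(\tau)=\bbb(\tau)M(2\pi i\bbv_0,\tau)$, while $\bba(\tau)$ itself is not in the row span of $M(2\pi i\bbv_0,\tau)$. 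Invoking the injectivity of $\mathcal{D}'(\mathbb{R})$ as a $\mathbb{C}[\tau]$-module (the one-variable Ehrenpreis--Malgrange principle), the map $\mathfrak{B}_{\bbv_0}\to\ker p(d/dt)$, $u\mapsto \bba(d/dt)u$, is surjective. Pick $u^{\ast}\in \mathfrak{B}_{\bbv_0}$ with $\bba(d/dt)u^{\ast}=e^{\lambda t}$ for some root $\lambda$ of $p$, and set
$$
w_1 := 0, \qquad w_2(t,\bbx) := u^{\ast}(t)\,e^{2\pi i\,\bbv_0\cdot\bbx};
$$
both lie in $\mathfrak{B}_{\mathcal{D}'_{\mathbb{A}}(\mathbb{R}^{d+1})}(M)$.

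Suppose, toward a contradiction, that approximate controllability holds for this pair at time $\mathrm{T}$. For every $\epsilon>0$ pick $w^{(\epsilon)}\in\mathfrak{B}$ with $w^{(\epsilon)}|_{(-\infty,0)}=0$ and $(w^{(\epsilon)}-w_2)|_{(\mathrm{T},\infty)\times\mathbb{R}^d}$ a regular distribution of sup-norm less than $\epsilon$. Integrating against $e^{-2\pi i\,\bbv_0\cdot\bbx}$ over a fundamental domain of the spatial lattice produces $u^{(\epsilon)}\in\mathcal{D}'(\mathbb{R})^n$ with $M(2\pi i\bbv_0,d/dt)u^{(\epsilon)}=0$, $u^{(\epsilon)}|_{(-\infty,0)}=0$, and $\|u^{(\epsilon)}-u^{\ast}\|_{L^{\infty}((\mathrm{T},\infty))}\le C\epsilon$ for the volume $C$ of the fundamental domain. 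Applying $\bba(d/dt)$ and using $p\bba\in\langle M(2\pi i\bbv_0,\tau)\rangle$ gives $p(d/dt)\big(\bba(d/dt)u^{(\epsilon)}\big)=0$, so $\bba(d/dt)u^{(\epsilon)}$ is a real-analytic function (a finite sum of $t^k e^{\mu t}$); since by locality it also vanishes on $(-\infty,0)$, it must vanish identically. Letting $\epsilon\to 0$ and using distributional continuity of $\bba(d/dt)$ yields $\bba(d/dt)u^{\ast}\equiv 0$ on $(\mathrm{T},\infty)$, hence on $\mathbb{R}$ by analyticity, contradicting $\bba(d/dt)u^{\ast}=e^{\lambda t}\not\equiv 0$.

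The most delicate step is the existence of the ``autonomous detector'' $\bba(d/dt)$ together with a trajectory $u^{\ast}$ realising a prescribed non-zero image in $\ker p(d/dt)$; this rests on the injectivity of $\mathcal{D}'(\mathbb{R})$ as a $\mathbb{C}[\tau]$-module, a classical one-dimensional consequence of the Ehrenpreis--Malgrange theorem. Once this is in place, spatial Fourier extraction, locality of constant-coefficient differential operators, and the real-analyticity of finite exponential sums on $\mathbb{R}$ combine to promote approximate controllability to strict controllability.
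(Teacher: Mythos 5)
Your proposal is correct and follows essentially the same route as the paper: reduce to the single spatial Fourier mode $\bbv_0$, use a torsion element of $\mC[\tau]^{1\times n}/\langle M(2\pi i\bbv_0,\tau)\rangle$ as a detector that is annihilated by a scalar operator $p(d/dt)$, and combine past-vanishing, autonomy of scalar ODE behaviours, and distributional continuity to contradict approximate controllability (the paper phrases the existence of a trajectory with nonzero detector output via the cogenerator property of $\calD'(\mR)$ rather than your injectivity/surjectivity argument, but these are two faces of the same module--behaviour duality). One small imprecision: $u\mapsto \bba(d/dt)u$ is surjective onto $\ker p_0(d/dt)$ for $p_0$ the \emph{minimal} annihilator of $[\bba]$ (a nonconstant divisor of your $p$), not necessarily onto $\ker p(d/dt)$; this does not affect the argument since you only need to realise one nonzero exponential $e^{\lambda t}$ with $\lambda$ a root of $p_0$.
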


Here ${\mathcal{D}}'_{{\mathbb{A}}}({\mathbb{R}}^{d+1})$ is, roughly
speaking, the set of all distributions on ${\mathbb{R}}^{d+1}$ that
are periodic in the spatial direction with a discrete set
${\mathbb{A}}$ of periods. The precise definition of $
{\mathcal{D}}'_{{\mathbb{A}}}({\mathbb{R}}^{d+1})$ is given below in
Subsection~\ref{subsection_sol_space}. 

The algebraic terminology in (4) of Theorem~\ref{main_theorem} is explained below. 
Consider the polynomial matrix 
\[ 
M=\left[ 
\begin{array}{ccc} 
p_{11} & \dots & p_{1n}\\ 
\vdots & & \vdots \\ 
p_{m1} & \dots & p_{mn} 
\end{array} 
\right] \in \mC[\tau]^{m \times n}.
\] 
Then each row of $M$ is an element of the free
$\mC[\tau]$-module $\mC[\tau]^{1\times n}$.

\begin{notation}[$\langle M\rangle$]
 Given $M\in \mC[\tau]^{m \times n}$, 
let $\langle M\rangle$ denote the $\mC[\tau]$-submodule
of $\mC[\tau]^{1\times n}$ generated by the rows of the polynomial
matrix $M$. 
\end{notation}

\begin{definition}[Torsion element; Torsion free module]
Let $M\in \mC[\tau]^{m \times n}$. 
 \begin{enumerate}
  \item An element $[\bbm] $ in the quotient
$\mC[\tau]$-module $\mC[\tau]^{1\times n}/ \langle M( \tau) \rangle$
(corresponding to an element $\bbm \in \langle R \rangle$) is called a {\em
  torsion element} if there exists a polynomial $p\in \mC[\tau]$ such
that $p \cdot[\bbm] =[\mathbf{0}]$, that is, $p\cdot \bbm \in \langle
M \rangle$.
\item The quotient $\mC[\tau]$-module $\mC[\tau]^{1\times n}/
\langle M( \tau) \rangle$ is said to be {\em torsion free} if it has
no nontrivial torsion element.
 \end{enumerate}

\end{definition}

The equivalence of (2) and (3) follows from the proof of
\cite[Theorem~1.4]{Sas}.

\subsection{The space ${\mathcal{D}}'_{{\mathbb{A}}}({\mathbb{R}}^{d+1})$} 
\label{subsection_sol_space}

\begin{definition}[Translation operator ${\mathbf{S_a}}$; Periodic distribution] 

$\;$

\noindent Let ${\mathbf{a}}\in {{\mathbb{R}}}^{d}$.
\begin{enumerate}
 \item The {\em translation
   operation} ${\mathbf{S_a}}$ on distributions in
 ${\mathcal{D}}'({\mathbb{R}}^d)$ is defined by
 $
\langle {\mathbf{S_a}}(T),\varphi\rangle=\langle
 T,\varphi(\cdot+{\mathbf{a}})\rangle$ for all $\varphi \in
 {\mathcal{D}}({\mathbb{R}}^d).$ 
\item A distribution $T\in {\mathcal{D}}'({\mathbb{R}}^d)$ is said to be
 {\em periodic with a period} $\mathbf{a}\in {\mathbb{R}}^d$ if $T=
 {\mathbf{S_a}}(T)$.
\end{enumerate}
\end{definition}

\begin{notation}[${\mathbb{A}}$, $A$, ${\mathcal{D}}'_{{\mathbb{A}}}({\mathbb{R}}^d)$] 

$\;$ 

\noindent Let ${\mathbb{A}}:=\{{\mathbf{a_1}}, \dots, {\mathbf{a_d}}\}$ be a
linearly independent set vectors in ${\mathbb{R}}^d$.  It will be convenient for the sequel 
to also introduce the following matrix: 
\begin{equation} 
\label{equation_matrix_A}
A:= \left[ \begin{array}{ccc} 
    \mathbf{a_1}^{\top} \\ \vdots \\ \mathbf{a_d}^{\top} 
    \end{array}\right].
\end{equation}

\noindent ${\mathcal{D}}'_{{\mathbb{A}}}({\mathbb{R}}^d)$ is the set of all
distributions $T\in \calD'(\mR^d)$ that satisfy 
$$
{\mathbf{S_{a_k}}}(T)=T\textrm{ for all }k=1,\dots, d.
$$
\end{notation}

From \cite[\S34]{Don}, $T\in {\mathcal{D}}'_{{\mathbb{A}}}({\mathbb{R}}^d)$ is a tempered distribution, and from the
above it follows by taking Fourier transforms that $ (1-e^{2\pi i
  {\mathbf{a_k}} \cdot \mathbf{y}})\widehat{T}=0$ for $ k=1,\dots,d.
$ It can be seen that
$$
\widehat{T}=\sum_{\mathbf{v} \in A^{-1} {\mathbb{Z}}^d} \alpha_{\mathbf{v}}(T) \delta_{\mathbf{v}},
$$ 
for some scalars $\alpha_{\mathbf{v}}\in {\mathbb{C}}$, and where
$A$ is the matrix given in \eqref{equation_matrix_A}. 
Also, in the above, $\delta_{\mathbf{v}}$ denotes the usual Dirac
measure with support in $\mathbf{v}$:
$$ 
\langle \delta_{\mathbf{v}}, \psi\rangle =\psi (\mathbf{v})
\;\textrm{ for }\psi \in {\mathcal{D}}'({\mathbb{R}}^d).
$$ 
By the Schwartz Kernel Theorem (see for instance \cite[p.~128,
  Theorem~5.2.1]{Hor}), ${\mathcal{D}}'({\mathbb{R}}^{d+1})$ is
isomorphic as a topological space to
${\mathcal{L}}({\mathcal{D}}({\mathbb{R}}),
{\mathcal{D}}'({\mathbb{R}}^d))$, the space of all continuous linear
maps from ${\mathcal{D}}({\mathbb{R}})$ to
${\mathcal{D}}'({\mathbb{R}}^d)$, thought of as vector-valued
distributions. For preliminaries on vector-valued distributions, we
refer the reader to \cite{Car}. We indicate this isomorphism by
putting an arrow on top of elements of
${\mathcal{D}}'({\mathbb{R}}^{d+1})$. 

\begin{notation}[$ 
{\mathcal{D}}'_{{\mathbb{A}}}({\mathbb{R}}^{d+1})$] 
 
 $\;$
 
 \noindent For $w\in
{\mathcal{D}}'({\mathbb{R}}^{d+1})$, we set $\vec{w}\in
{\mathcal{L}}({\mathcal{D}}({\mathbb{R}}),
{\mathcal{D}}'({\mathbb{R}}^d))$ to be the vector-valued distribution
defined by
 $
\langle \vec{w}(\varphi), \psi\rangle=\langle w, \psi \otimes \varphi\rangle 
$  
for $\varphi\in {\mathcal{D}}({\mathbb{R}})$ and $\psi \in
        {\mathcal{D}}({\mathbb{R}}^d)$.
        
        \medskip 
        
\noindent If ${\mathbb{A}}:=\{{\mathbf{a_1}}, \dots, {\mathbf{a_d}}\}$ is a
linearly independent set vectors in ${\mathbb{R}}^d$, then we define
$$ 
{\mathcal{D}}'_{{\mathbb{A}}}({\mathbb{R}}^{d+1}):=\{w\in
        {\mathcal{D}}'({\mathbb{R}}^{d+1}): \textrm{ for all }\varphi
        \in {\mathcal{D}}({\mathbb{R}}), \; \vec{w}(\varphi) \in
            {\mathcal{D}}'_{{\mathbb{A}}}({\mathbb{R}}^d)\}.
$$        
\end{notation}

\bigskip 

\noindent For $w\in {\mathcal{D}}'_{{\mathbb{A}}}({\mathbb{R}}^{d+1})$,
$$ 
\frac{\partial}{\partial x_k}w \in
{\mathcal{D}}'_{{\mathbb{A}}}({\mathbb{R}}^{d+1}) \;\textrm{ for }
k=1, \dots, d, \; \textrm{ and } \;\; \frac{\partial}{\partial t}w \in
{\mathcal{D}}'_{{\mathbb{A}}}({\mathbb{R}}^{d+1}).
$$ 
Also, for $w\in {\mathcal{D}}'_{{\mathbb{A}}}({\mathbb{R}}^{d+1})$,
we define $\widehat{w}\in {\mathcal{D}}'({\mathbb{R}}^{d+1})$ by
\begin{equation} \label{equation_spatial_fourier_transform}
\langle \widehat{w}, \psi \otimes \varphi \rangle
=
\langle \vec{w}(\varphi), \widehat{\psi} \rangle,
\end{equation}
for $\varphi \in {\mathcal{D}}({\mathbb{R}})$ and $\psi \in
        {\mathcal{D}}({\mathbb{R}}^d)$. In the right hand side 
        of \eqref{equation_spatial_fourier_transform},  
        $\widehat{\cdot}$ is the usual Fourier transform 
        $\psi \mapsto \widehat{\psi}: \calS(\mR^d) \rightarrow \calS(\mR^d)$ 
        on the Schwartz space $\calS(\mR^d)$ of test functions 
        with rapidly decreasing derivatives.
        That \eqref{equation_spatial_fourier_transform} specifies a
        well-defined distribution in
        ${\mathcal{D}}'({\mathbb{R}}^{d+1})$, can be seen using the
        fact that for every $\Phi\in
        {\mathcal{D}}({\mathbb{R}}^{d+1})$, there exists a sequence of
        functions $(\Psi_n)_n$ that are finite sums of direct products
        of test functions, that is, $\Psi_n=\sum_{k} \psi_{k} \otimes
        \varphi_k$, where $\psi_k\in {\mathcal{D}}({\mathbb{R}}^d)$
        and $\varphi_k\in {\mathcal{D}}({\mathbb{R}})$, such that
        $\Psi_n$ converges to $\Phi$ in
        ${\mathcal{D}}({\mathbb{R}}^{d+1})$. We also have
$$ 
\widehat{ \frac{\partial}{\partial x_k} w}= 2\pi i y_k \widehat{w}
        \;\textrm{ for } k=1, \dots, d,\; \textrm{ and }\;\; \widehat{
          \frac{\partial}{\partial t} w}=\frac{\partial }{\partial
          t}\widehat{w}.
$$
Here $\mathbf{y}=(y_1,\dots, y_d)$ is  the Fourier transform variable.

\section{Proof of Theorem~\ref{main_theorem}}

Before we prove our main result, we illustrate the key idea behind the
proof of our algebraic condition. For a trajectory in the behaviour,
by taking Fourier transform with respect to the spatial variables, the
partial derivatives with respect to the spatial variables are
converted into the polynomial coefficients $c_{ij}(2\pi i \mathbf{y}
)$, where $\mathbf{y}$ is the vector of Fourier transform variables
$y_1 ,\dots, y_d$. But the support of $\widehat{w}$ is carried on a
family of lines, indexed by $\mathbf{n}\in{\mathbb{Z}}^d$, in
${\mathbb{R}}^{d+1}$, parallel to the time axis. So we obtain a family
of ordinary differential equations, parameterized by
$\mathbf{n}\in{\mathbb{Z}}^d$, and by ``freezing'' an $\mathbf{n}\in
{\mathbb{Z}}^d$, we get an ordinary differential equation. So
essentially the proof is completed by looking at the ordinary
differential equation characterizations of controllability and
approximate controllability, and it turns out that the two notions
actually coincide there.

\begin{proof}[Proof of Theorem~\ref{main_theorem}]  
We will show that (1)$\Rightarrow$(4)$\Rightarrow$(3)$\Rightarrow$(2)$\Rightarrow$(1). 

\medskip 

\noindent {\bf (1) $\Rightarrow$ (4)}: Suppose that $ {\mathfrak{B}}_{
  {\mathcal{D}}'_{{\mathbb{A}}}({\mathbb{R}}^{d+1}) }(M)$ is
approximately controllable. Let $\mathbf{v}\in
A^{-1}{\mathbb{Z}}^d$. Suppose that $\Theta\in
{\mathfrak{B}}_{{\mathcal{D}}'({\mathbb{R}})} (M(2\pi i {\mathbf{v}},
\tau))$. Set
\begin{eqnarray*}
 w_1&:=&0,\\
 w_2&:=& e^{2\pi i {\mathbf{v}}\cdot \mathbf{x}}  \otimes \Theta. 
\end{eqnarray*}
Then $w_1,w_2 \in ({\mathcal{D}}'_{{\mathbb{A}}}({\mathbb{R}}^{d+1}))^{n}$,  
since  for all $k\in \{1,\dots, d\}$, we have 
$$
\mathbf{S_{a_k}} w_2= e^{2\pi i \mathbf{v}\cdot (\mathbf{x}+\mathbf{a_k})}\otimes \Theta
=e^{2\pi i \mathbf{v}\cdot \mathbf{a_k}}e^{2\pi i \mathbf{v}\cdot \mathbf{x}}\otimes \Theta
= 1\cdot e^{2\pi i \mathbf{v}\cdot \mathbf{x}}\otimes \Theta= w_2.
$$ 
Also, $w_1,w_2\in
{\mathfrak{B}}_{{\mathcal{D}}'_{{\mathbb{A}}}({\mathbb{R}}^{d+1})}(M)$,
because
$$ 
M\left(\frac{\partial}{\partial x_1},\cdots,
\frac{\partial}{\partial x_d} , \frac{\partial}{\partial t}\right) w_2
= e^{2\pi i \mathbf{v}\cdot \mathbf{x}} M\left(2\pi i {\mathbf{v}},
\frac{d}{dt}\right)\Theta =0\cdot e^{2\pi i \mathbf{v} \cdot
  \mathbf{x}}=0,
$$ 
thanks to the fact that $\Theta \in
     {\mathfrak{B}}_{{\mathcal{D}}'({\mathbb{R}})} (M(2\pi i
     {\mathbf{v}}, \tau))$ giving
$$
M\left(2\pi i {\mathbf{v}}, \frac{d}{dt}\right) \Theta=0.
$$ 
As
${\mathfrak{B}}_{{\mathcal{D}}'_{{\mathbb{A}}}({\mathbb{R}}^{d+1})}(M)$
is approximately controllable in time $\textrm{T}$, there exists a
sequence $(w_k)_{k\in \mN}$ in
${\mathfrak{B}}_{{\mathcal{D}}'_{{\mathbb{A}}}({\mathbb{R}}^{d+1})}(M)$
such that $(w_k-w_2)|_{(\textrm{T},\infty)}$ converges to $0$ in
$(L^\infty((\textrm{T},\infty)\times \mR^d))^n$.  But this implies
that $(w_k-w_2)|_{(\textrm{T},\infty)}$ converges to $0$ in
$({\mathcal{D}}'_{{\mathbb{A}}}((\textrm{T},\infty) \times \mR^d)^n$,
and owing to the continuity of the Fourier transform $\widehat{\cdot}:
{\mathcal{D}}'_{{\mathbb{A}}}({\mathbb{R}}^{d+1})\rightarrow
\calD'(\mR^{d+1} )$ with respect to the spatial variables, it follows
that $ (\widehat{w_k}-\widehat{w_2})|_{(\textrm{T},\infty)}$ converges
to $0$ in $(\calD'((\textrm{T},\infty) \times \mR^d )^n$. We can write
$$ 
\widehat{w_k}=\sum_{\mathbf{v}\in A^{-1}{\mathbb{Z}}^d }
\delta_{\mathbf{v}} \otimes T_k^{(\mathbf{v})} ,
$$ 
where each $T_k^{(\mathbf{v})} \in (\calD'(\mR))^n$. Since
$\widehat{w_k}|_{(\textrm{T},\infty)}$ converges to
$\widehat{w_2}|_{(\textrm{T},\infty)}$ in the space
$(\calD'((\textrm{T},\infty) \times \mR^d ))^n$, it follows that
$T_k^{(\mathbf{v})}|_{(\textrm{T},\infty)}$ converges in $\calD'((\textrm{T},\infty))$ to
$\Theta|_{(\textrm{T},\infty)} $ for all $ \mathbf{v}\in
A^{-1}{\mathbb{Z}}^d$. Also, $T_k^{(\mathbf{v})}|_{(-\infty,0)}=0$.
As
$$
M\left(2\pi i {\mathbf{v}}, \frac{d}{dt}\right) T_k^{(\mathbf{v})}=0,
$$ 
so that $T_k^{(\mathbf{v})}\in
{\mathfrak{B}}_{{\mathcal{D}}'({\mathbb{R}})}(M(2\pi i \bbv , \tau))$.
 
Now suppose that there exists a nontrivial element $[\bbm]$ in the
$\mC[\tau]$-quotient module $ \mC[\tau]^{1\times n}/ \langle M(2\pi i
\bbv , \tau)\rangle$ and a nonzero polynomial $p \in \mC[\tau]$ such
that $p \cdot \bbm \in \langle M(2\pi i \bbv , \tau)\rangle$.  As $
\bbm \not\in \langle M(2\pi i \bbv , \tau)\rangle$, it follows from
the cogenerator property of $\calD'(\mR)$ (see for example
Definition~3.4 on page 774 and the paragraph following the proof of
Lemma~3.5 on page 775 of \cite{Woo}) that $\bbm(d/dt)$ is not
identically $0$ on
${\mathfrak{B}}_{{\mathcal{D}}'({\mathbb{R}})}(M(2\pi i \bbv ,
\tau))$. Let the element $w_0\in
    {\mathfrak{B}}_{{\mathcal{D}}'({\mathbb{R}})}(M(2\pi i \bbv ,
    \tau))$ be such that
$$
\bbm\Big(\frac{d}{dt}\Big)\neq 0.
$$ 
Without loss of generality, we may assume that
$$ 
u_0:=\bbm\Big(\frac{d}{dt}\Big)\bigg|_{(0,\infty)}\neq 0
$$ 
(otherwise $w_0$ can be shifted to achieve this). As all
topological vector spaces are Hausdorff (\cite[Theorem~1.12]{Rud}), it
follows that in the topological vector space $\calD'((0,\infty))$,
there exists a neighbourhood $N$ of $u_0$ that does not contain
$0$. Since the map
$$
\bbm\Big(\frac{d}{dt}\Big): (\calD'((0,\infty)))^n\rightarrow \calD'((0,\infty))
$$ 
is continuous, there exists a neighbourhood $N_1$ of
$w_0|_{(0,\infty)}$ in $(\calD'((0,\infty)))^n$ such that
$\widetilde{w}_0 \in N_1$ implies that
$$
\bbm\Big(\frac{d}{dt}\Big)\widetilde{w}_0 \in N. 
$$ 
Choose $k$ large enough so that $\bbS_{-\textrm{T}}
T_{k}^{(\mathbf{v})} \in N_1$. Set
$$
u:= \bbm\Big(\frac{d}{dt}\Big)T_{k}^{(\mathbf{v})}.
$$
Then we have $u\neq 0$. 
 
On the other hand, since $p\cdot \bbm \in \langle M(2\pi i \bbv ,
\tau)\rangle$ and since the element $T_k^{(\mathbf{v})}\in
    {\mathfrak{B}}_{{\mathcal{D}}'({\mathbb{R}})}(M(2\pi i \bbv ,
    \tau))$, it follows that
$$
p\Big(\frac{d}{dt}\Big) u=0.
$$ 
But we know that since $T_k^{(\mathbf{v})}|_{(-\infty,0)}=0$, also
$$
u|_{(-\infty,0)}= \bbm\Big(\frac{d}{dt}\Big)T_{k}^{(\mathbf{v})}|_{(-\infty,0)}=0.
$$ 
By the autonomy of behaviours corresponding to nonzero polynomials
(see for example \cite[Theorem~1.2]{Sas}) we conclude that $u=0$, a
contradiction to the last sentence in the previous paragraph.  This
completes the proof of (1)$\Rightarrow$(4).
  
\medskip 
  
\noindent {\bf (4) $\Rightarrow$ (3)}: Suppose that (3) does not hold,
and let $\mathbf{v} \in A^{-1} {\mathbb{Z}}^d$ be such that
\begin{equation}
\label{eq_proof_of_4_giving_3}
\neg \bigg( \exists r_{\mathbf{v}}\in \mZ \textrm{ with } 0\leq
r_{\mathbf{v}}\leq \min\{n,m\}\textrm{ and }\forall t\in {\mathbb{C}},
\textrm{ rank}\left( M( 2\pi i \mathbf{v}, t) \right) =
r_{\mathbf{v}}\bigg).
\end{equation}
From \cite[Theorem~B.1.4, page 404]{PolWil}, it follows that there
exist unimodular polynomial matrices $U,V$ with entries from
$\mC[\tau]$ such that
$$
M(2\pi i \mathbf{v}, \tau)= U \Sigma V,
$$
where 
$$ 
\Sigma:=\left[\begin{array}{c|c}\begin{array}{ccc} 
d_1 & & \\ &
\ddots & \\ && d_r
\end{array} & \mathbf{ 0}\\  \hline  \mathbf{0}&\mathbf{ 0}
\end{array} \right],
$$ 
and the $d_k$s polynomials such that $d_k$ divides $d_{k+1}$ for
all $k\in \{1,\cdots, r-1\}$. Thus any vector in $\langle M(2\pi i
\mathbf{v}, \tau) \rangle$ is of the form
\begin{equation}
\label{eq_proof_of_4_giving_3b}
\bbu U \Sigma V = \widetilde{\bbu} \Sigma V
= \widetilde{u_1} d_1 \bbv_1 + \cdots+ \widetilde{u_r} d_r \bbv_r,
\end{equation}
where $\bbu \in \mC[\tau]^{1\times m}$, 
$$ 
\widetilde{\bbu}:=\bbu U= \left[ \begin{array}{ccc} \widetilde{u_1}
    & \cdots & \widetilde{u_r}\end{array}\right],
$$ 
and $\bbv_1,\cdots \bbv_n$ are the rows of $V$. Let
$\bbm:=\bbv_r$. It follows that $d_r$ is not constant thanks to
\eqref{eq_proof_of_4_giving_3}. Clearly $\bbm \not\in \langle M(2\pi i
\mathbf{v}, \tau) \rangle$, for otherwise we would obtain
$\widetilde{u_1} d_1 \bbv_1 + \cdots+ \widetilde{u_r} d_r
\bbv_r=\bbv_r$, and so $\widetilde{u_r} d_r =1$, contradicting the
fact that $d_r$ is not a constant. Moreover, from
\eqref{eq_proof_of_4_giving_3}, $d_r \cdot \bbm \in \langle M(2\pi i
\mathbf{v}, \tau) \rangle$. Hence $[\bbm]$ is a nontrivial torsion
element in the $\mC[\tau]$-module $\mC[\tau]^{1\times n}/\langle
M(2\pi i \bbv, \tau) \rangle$. Consequently, $\mC[\tau]^{1\times
  n}/\langle M(2\pi i \bbv, \tau) \rangle$ is not torsion free, that
is, (4) does not hold. Hence we have shown that $\neg$(3)$\Rightarrow
\neg$(4), that is, (4)$\Rightarrow $(3).

\medskip 

\noindent {\bf (3) $\Rightarrow$ (2)}: Suppose that
$\textrm{T}>0$. Let $w_1, w_2 \in
          {\mathfrak{B}}_{{\mathcal{D}}'_{{\mathbb{A}}}({\mathbb{R}}^{d+1})}(M)$. Then
          we have
$$ 
\widehat{w_1} = \sum_{\mathbf{v} \in A^{-1} {\mathbb{Z}}^d}
          \delta_{\mathbf{v}}\otimes T_{1}^{(\mathbf{v})}, \quad
          \widehat{w_2} = \sum_{\mathbf{v} \in A^{-1} {\mathbb{Z}}^d}
          \delta_{\mathbf{v}}\otimes T_{2}^{(\mathbf{v})},
$$ 
for some $T_1^{(\mathbf{v})}, T_2^{(\mathbf{v})} \in
          ({\mathcal{D}}'({\mathbb{R}}))^n$.  Moreover, owing to the
          correspondence between
          ${\mathcal{D}}'_{{\mathbb{A}}}({\mathbb{R}}^d)$ and the
          space of sequences $s'({\mathbb{Z}}^d)$ of at most
          polynomial growth, it follows that for each $\varphi \in
          {\mathcal{D}}({\mathbb{R}})$, there exist $M_\varphi >0$ and
          a positive integer $k_\varphi$, such that we have the
          estimates
$$
\|\langle T_{1}^{(\mathbf{v})}, \varphi\rangle\|_2  \leq  M_{\varphi}(1+\|\mathbf{n}\|_2)^{k_{\varphi}} ,\quad 
\|\langle T_{2}^{(\mathbf{v})} , \varphi\rangle\|_2  \leq  M_{\varphi}(1+\|\mathbf{n}\|_2)^{k_{\varphi}} ,
$$ 
for all $\mathbf{n}:=A\mathbf{v} \in {\mathbb{Z}}^d$, and where
$\|\cdot\|_2$ is the usual Euclidean norm.  Let  $\theta \in
C^\infty({\mathbb{R}})$ be such that $\theta(t)=1$ for all $t\leq 0$,
$\theta(t)=0$ for all $t>\mathrm{T}/4$ and $0\leq \theta (t)\leq 1$
for all $t\in {\mathbb{R}}$. Define $T^{(\mathbf{v})}\in
({\mathcal{D}}'({\mathbb{R}}))^n$ by
$$
T^{(\mathbf{v})}:= \theta T_1^{(\mathbf{v})}+\theta(\mathrm{T}-\cdot) T_2^{(\mathbf{v})}.
$$ 
Set $\widehat{w}\in {\mathcal{D}}'({\mathbb{R}}^{d+1})$ to be 
$$
\widehat{w}=\displaystyle \sum_{\mathbf{v} \in A^{-1}
  {\mathbb{Z}}^d}\delta_{\mathbf{v}}\otimes T^{(\mathbf{v})}.  
  $$ 
  Then
for every $\varphi \in {\mathcal{D}}({\mathbb{R}})$, we have
\begin{eqnarray*}
\|\langle T^{(\mathbf{v})}, \varphi\rangle\|_2  &\leq& \|\langle \theta T_1^{(\mathbf{v})}, \varphi \rangle \|_2+ 
\|\langle \theta(\mathrm{T}-\cdot) T_2^{(\mathbf{v})}, \varphi \rangle \|_2 \\
&\leq & M_{\theta \varphi} (1+\|\mathbf{n}\|_2)^{k_{\theta \varphi}}+ 
M_{\theta (\mathrm{T}-\cdot)\varphi} (1+\|\mathbf{n}\|_2)^{k_{\theta(\mathrm{T}-\cdot) \varphi}}\\
&\leq & \max\{M_{\theta \varphi},M_{\theta (\mathrm{T}-\cdot)\varphi}\}  
(1+\|\mathbf{n}\|_2)^{\max\{k_{\theta \varphi},k_{\theta(\mathrm{T}-\cdot) \varphi} \}},
\end{eqnarray*}
and so $\vec{w}(\varphi)\in
{\mathcal{D}}'_{{\mathbb{A}}}({\mathbb{R}}^d)$.  Thus $w\in
{\mathcal{D}}'_{{\mathbb{A}}}({\mathbb{R}}^{d+1})$.  Also, $w\in
{\mathfrak{B}}_{{\mathcal{D}}'_{{\mathbb{A}}}({\mathbb{R}}^{d+1})}(M)$
because
$$
M\left( 2\pi i \mathbf{y}, \frac{\partial}{\partial t} \right) ( \delta_{\mathbf{v}}\otimes T^{(\mathbf{v})} ) 
= 
M\left( 2\pi i \mathbf{v}, \frac{d}{d t} \right)( \delta_{\mathbf{v}}\otimes T^{(\mathbf{v})} ) 
=
0,
$$ 
for each $\mathbf{v} \in A^{-1}{\mathbb{Z}}^d$, and so
$$
M\left( 2\pi i \mathbf{y}, \frac{\partial}{\partial t} \right) \widehat{w} =0.
$$ 
Consequently, 
$$ 
M\left( \frac{\partial}{\partial x_1},\cdots,
\frac{\partial}{\partial x_d} , \frac{\partial}{\partial t} \right) w
=0,
$$
that is, $w\in {\mathfrak{B}}_{{\mathcal{D}}'_{{\mathbb{A}}}({\mathbb{R}}^{d+1})}(M)$.

Finally, because $
T^{(\mathbf{v})}|_{(-\infty,0)}=T_1^{(\mathbf{v})}|_{(-\infty,0)}$ and
$ T^{(\mathbf{v})}|_{(\mathrm{T},
  +\infty)}=T_2^{(\mathbf{v})}|_{(\mathrm{T}, +\infty)}$, it follows
that $\widehat{w}|_{(-\infty, 0)}=\widehat{w_1}|_{(-\infty,0)}$ and
$\widehat{w}|_{(\mathrm{T}, +\infty)} =\widehat{w_2}|_{(\mathrm{T},
  +\infty)}$. Consequently, $w|_{(-\infty, 0)}=w_1|_{(-\infty,0)}$ and
$w|_{(\mathrm{T}, +\infty)}=w_2|_{(\mathrm{T}, +\infty)}$, showing
that the behaviour is controllable in time $\mathrm{T}$. This
completes the proof of (3)$\Rightarrow $(2).

\medskip 

\noindent \noindent {\bf (2) $\Rightarrow$ (1)}: This follows trivially
from the definitions.
\end{proof}

\end{document}